\numberwithin{equation}{section}
\newtheorem{thm}{Theorem}[section]
\newtheorem{lem}[thm]{Lemma}
\newtheorem{rem}[thm]{Remark}
\newcommand\cB{{\mathcal B}}
\newcommand\cD{{\mathcal D}}
\newcommand\bR{{\mathbb R}}
\newcommand\bZ{{\mathbb Z}}
\newcommand\ve{\varepsilon}
\newcommand\vf{\varphi}
\newcommand{\Id}{1}
\newcommand{\pd}[1]{\partial_{#1}}
\newcommand{\pr}{^*}
\newcommand{\boundary}{\partial}
\begin{document}

\title[Map lattices with collisions]{Map Lattices coupled by collisions}
\author{Gerhard Keller and Carlangelo Liverani}
\address{Gerhard Keller\\Department Mathematik\\Universit\"at
  Erlangen-N\"urnberg\\Bismarckstr. 1$\frac12$, 91052 Erlangen, Germany}
\email{{\tt keller@mi.uni-erlangen.de}}
\address{Carlangelo Liverani\\
Dipartimento di Matematica\\
II Universit\`{a} di Roma (Tor Vergata)\\
Via della Ricerca Scientifica, 00133 Roma, Italy.}
\email{{\tt liverani@mat.uniroma2.it}}
\date{\today}
\begin{abstract}
  We introduce a new coupled map lattice model in which the weak interaction
  takes place via rare ``collisions''. By ``collision'' we mean a strong
  (possibly discontinuous) change in the system. For such models we prove
  uniqueness of the SRB measure and exponential space-time decay of
  correlations.
\end{abstract}
\thanks{We are indebted to the ESI where, during the Workshop on Hyperbolic
  Dynamical Systems with Singularities (2008), this work was started. L.C. thanks the ENS, Paris, where he was invited during part of this work. Also we like to thank the Institut Henri Poincare - Centre Emile Borel where, during the trimester M\'ecanique statistique, probabilit\'es et syst\`emes de particules (2008), this work was finished. Finally, G.K. acknowledges the support by a grant from the DFG}
\keywords{Coupled map lattice, SRB measure, exponential decay of correlations}
\subjclass[2000]{37C30,37L60,37D20}
\maketitle
\section{Introduction}
During the last years many results have been published on coupled map
lattices. Much of this work deals with the weak coupling situation; see
\cite{book} and references therein to have a more precise idea of all the
related work and results. In most of this considerable body of work the weak
coupling is described by a \emph{diffeomorphism} of the state space close to
identity. Only recently couplings close to identity but not diffeomorphic
could be investigated in a mathematically rigorous way. This setting (coupling
close to identity) models a weak interaction and is reminiscent of a situation
in which a collection of systems is interacting via a weak potential. Yet, a
collection of systems can also have interactions that are weak only on
average. The typical example of such a situation are interactions that can be
strong but are rare, such as in the case of collisions in a rarefied gas.
Examples of such a situation that has attracted some attention lately are
models of the type introduced in \cite{liverani-bunimovich-p-s} and recently
used in \cite{gaspard-gilbert1, gaspard-gilbert2} to argue for a derivation of
the Fourier Law from microscopic dynamics.

In this note we consider a simple system of coupled dynamics of the latter
type. Namely a lattice of piecewise expanding interval maps with strong but
rare interactions.  Of course, this model is very far from a system of
interacting disks, yet it is interesting that the available techniques can be
applied to a case with strong but rare interactions. Given the present efforts
in trying to devise a setting for Anosov maps with discontinuities that shares
the same good properties of the $BV$ setting for piecewise expanding maps
\cite{demer-liverani, baladi-gouezel} it is reasonable to hope that in the
future the present results could be extend to more realistic situations.

What we prove is that such a system, if the local maps are sufficiently
expanding, has a unique SRB measure with exponential mixing properties in time
and space. These are the same type of results we proved for systems with weak interactions of the
``coupling close to identity'' type in \cite{kl-cmp}.

\section{The system and the results}

Consider $X=[0,1]^{\bZ^d}$ and a map $T:[0,1]\to [0,1]$, piecewise smooth and
expanding, that is $|DT|\geq \lambda>1$. The uncoupled system is described by
the product dynamics $F_0:X\to X$ defined by $F_0(x)_i=T(x_i)$. The strength
of the interaction will be expressed by a parameter $\ve>0$ which measures how
rare the interactions are. To be precise, let $\{e_i\}$ be the standard base
of $\bR^d$ and $V:=\{e_i, -e_i\}\subset \bR^d$, and, for each $\ve>0$
sufficiently small, fix a set of disjoint open intervals $\{A_{\ve,v}\}_{v\in
  V}\subset [0,1]$ such that $|A_{\ve,v}|=\ve$. Consider the
coupling\footnote{This special case is considered for simplicity. Yet, one can
  easily treat the more general case in which $x_{i+v}$ is replaced by
  $\phi_v(x_i, x_{i+v})$, for some set of invertible smooth maps
  $\phi_v:[0,1]^2\to[0,1]^2$.}
\[
(\Phi_\ve(x))_i=\begin{cases} x_{i+v}\quad &\text{ if } x_i\in A_{\ve,v} \text{ and } x_{i+v}\in A_{\ve,-v} \text{ for some }v\in V\\
  x_i &\text { otherwise.}
                                               \end{cases}
\] 
The dynamics we are interested in is then defined by $F_\ve=\Phi_\ve\circ F_0$.
\begin{rem}
  The interpretation of $\Phi_\ve$ is quite obvious: nearby systems can
  interact only if the coordinates belong to a small set (the ``collision''
  set). If this is the case, then the coordinates undergo a violent change.
\end{rem}  
 
We use $\|\cdot\|$ for the bounded variation norm and $|\cdot|$ for the total
variation norm of a measure. That is, calling $\cD$ the space of
differentiable local functions on $X$ and
$\cD_1:=\{\vf\in\cD\;:\;|\vf|_\infty\leq 1\}$, for each complex Borel measure
$\mu$ on $X$ we define
\begin{equation}\label{norms}
  \begin{split}
    |\mu|&:=\sup_{\vf\in\cD_1}\mu(\vf)\\
    \|\mu\|&:=\sup_{i\in\bZ^d}\sup_{\vf\in\cD_1}\mu(\partial_{x_i}\vf).
  \end{split}
\end{equation}
To study the statistical properties of the map $F_\ve$ we will follow the well
established path of studying its action on the space of measures. That is we
will investigate the operator $F_\ve\pr$.\footnote{As usual for each map
  $\Psi:X\to X$ the operator $\Psi\pr$ is defined by
  $\Psi\pr\mu(\vf)=\mu(\vf\circ \Psi)$, for each measure $\mu$ and function
  $\vf$.}  To this end we will follow the path laid down in \cite{kl-cmp} and
restrict our study to measures belonging to the space
$\cB:=\{\|\mu\|<\infty\}$, see \cite{kl-lecture} for a detailed explanation of
the meaning and properties of this choice. We will then use exactly the same
strategy as developed in \cite{kl-cmp}.

The following lemma is proven in \cite[Proposition 4]{kl-lecture}; see also
Lemma 2.2 of \cite{kl-cmp}.  
\begin{lem}\label{lem:indip}
There exists a constant $B_0>0$ such that for each $\mu\in\cB$ holds
\[
\begin{split}
|F_0\pr\mu|&\leq |\mu|\\
\|F_0\pr\mu\|&\leq  2\lambda^{-1}\|\mu\|+B_0|\mu|.
\end{split}
\]
\end{lem}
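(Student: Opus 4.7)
The plan is to exploit the product structure $F_0(x)_i=T(x_i)$ to reduce the estimate to a one-dimensional Lasota--Yorke type integration-by-parts along the $x_i$-coordinate.

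For the bound $|F_0^*\mu|\le|\mu|$: given $\varphi\in\cD_1$, the composition $\varphi\circ F_0$ is bounded by $1$ in sup norm but only piecewise rather than globally smooth. Mollifying $\varphi\circ F_0$ near the discontinuity hyperplanes $\{x_i=a_k\}$ of $F_0$ produces a sequence in $\cD_1$ converging boundedly and $\mu$-a.e.\ to $\varphi\circ F_0$, whence $F_0^*\mu(\varphi)=\mu(\varphi\circ F_0)\le|\mu|$ by passing to the limit.

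For the BV bound I fix $i\in\bZ^d$, $\varphi\in\cD_1$, and set $\psi:=\varphi\circ F_0$. Let $0=a_0<a_1<\dots<a_K=1$ be the endpoints of the smooth branches of $T$. On each branch the chain rule yields $(\partial_{x_i}\varphi)\circ F_0=(T'(x_i))^{-1}\,\partial_{x_i}\psi$, so
\[
F_0^*\mu(\partial_{x_i}\varphi)=\sum_{k=0}^{K-1}\mu\!\left(\mathbf{1}_{(a_k,a_{k+1})}(x_i)\,\frac{\partial_{x_i}\psi}{T'(x_i)}\right).
\]
I then integrate by parts in the $x_i$-variable on each branch, which is justified by disintegrating $\mu$ in the $x_i$-coordinate (the finiteness of $\|\mu\|$ forces $\mu$ to possess a bounded-variation density in that direction for $\nu$-a.e.\ choice of the remaining coordinates $x'$). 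Each summand decomposes into two types of contributions: an interior piece $-\mu(\mathbf{1}_{(a_k,a_{k+1})}\psi\,\partial_{x_i}(1/T'(x_i)))$, bounded by $|\partial_{x_i}(1/T')|_\infty\cdot|\mu|\le B_0|\mu|$ thanks to the piecewise-smooth bounded-distortion hypothesis on $T$; and boundary contributions $\pm[\psi/T'](a_k^\pm)$ paired with the trace of $\mu$ on the hyperplane $\{x_i=a_k\}$. Summing over $k$, the boundary contributions at each interior $a_k$ combine into the jump $[\psi/T'](a_k^-)-[\psi/T'](a_k^+)$, which has modulus at most $2\lambda^{-1}$ since $|\psi|_\infty\le1$ and $|T'|\ge\lambda$; the total mass of the traces over all $a_k$'s is in turn dominated by $\|\mu\|$ via a one-dimensional BV trace inequality in the $x_i$-direction. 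Assembling the pieces yields $\|F_0^*\mu\|\le 2\lambda^{-1}\|\mu\|+B_0|\mu|$.

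The main obstacle I expect is the rigorous treatment of the boundary terms: one needs a usable trace theory for measures on the infinite product $X$ restricted to codimension-one slices $\{x_i=a\}$, and an inequality bounding the total variation of these traces in terms of $\|\mu\|$. Once this trace estimate is established, the factor $2$ emerges naturally from the two branches of $T$ meeting at each interior breakpoint, and the rest of the argument is a routine piecewise integration by parts followed by an approximation of $\psi/T'$ by elements of $\cD_1$.
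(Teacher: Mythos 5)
Note first that the paper does not prove Lemma~\ref{lem:indip} itself: it is quoted from \cite{kl-lecture} (Proposition 4), and the technique used there is the one you can see in this paper's proof of Lemma~\ref{lem:couple}: instead of integrating by parts and producing traces, one writes the discontinuous test function $\psi/T'$ as a Lipschitz-in-$x_i$ function of sup norm at most $2\lambda^{-1}$ (paired with $\|\mu\|$) plus a piecewise linear interpolant whose $x_i$-derivative is bounded by a constant depending on the minimal branch length (paired with $|\mu|$). Your route --- reduce to the finite-dimensional marginal density $h$, which is legitimate since $\vf$ is local, and run the classical one-dimensional Lasota--Yorke integration by parts branch by branch --- is genuinely different and can be made to work, but as written it has a real gap. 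When you integrate by parts against $\mu$ on a branch $(a_k,a_{k+1})$, the derivative falls on the whole product $h/T'$, not just on $1/T'$: the correct identity is
\[
\int_{a_k}^{a_{k+1}}\frac{\pd{x_i}\psi}{T'}\,h\,dx_i
=\Bigl[\tfrac{\psi}{T'}h\Bigr]_{a_k^+}^{a_{k+1}^-}
-\int_{a_k}^{a_{k+1}}\frac{\psi}{T'}\,dh
-\int_{a_k}^{a_{k+1}}\psi\,h\,\pd{x_i}\bigl(\tfrac{1}{T'}\bigr)\,dx_i\,,
\]
and your decomposition omits the middle Stieltjes term $\int(\psi/T')\,dh$. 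That term is not lower order: it is bounded by $\lambda^{-1}\mathrm{Var}(h)\le\lambda^{-1}\|\mu\|$ and contributes one of the two factors of $\lambda^{-1}$ in the final coefficient $2\lambda^{-1}$. Attributing the entire $2\lambda^{-1}$ to the jump terms, as you do, does not survive once this term is restored.

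Two further points need repair. First, point values of a BV function are \emph{not} dominated by the variation alone (take $h$ constant), so "the total mass of the traces is dominated by $\|\mu\|$" must be replaced by the standard estimate $|h(a_k^{+})|+|h(a_{k+1}^{-})|\le\mathrm{Var}_{(a_k,a_{k+1})}(h)+2|a_{k+1}-a_k|^{-1}\int_{a_k}^{a_{k+1}}|h|$; summed over branches this gives $\mathrm{Var}(h)+C\int|h|$, i.e.\ $\|\mu\|+C|\mu|$, with each branch's variation used \emph{once for both of its endpoints}. With this sharper bookkeeping the boundary terms cost only $\lambda^{-1}\|\mu\|+C|\mu|$, which together with the omitted Stieltjes term recovers $2\lambda^{-1}\|\mu\|+B_0|\mu|$; your cruder count (jump of size $2\lambda^{-1}$ at each breakpoint times a trace sum bounded by $\|\mu\|$) would yield $3\lambda^{-1}\|\mu\|$ once the missing term is added. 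Second, the one-sided traces $h(a_k^-)$ and $h(a_k^+)$ need not coincide for a BV density, so the boundary contributions do not literally "combine into the jump of $\psi/T'$ times $h(a_k)$"; the discrepancy is again controlled by $\mathrm{Var}(h)$ and must be tracked. All of these issues are exactly what the test-function-surgery argument of Lemma~\ref{lem:couple} is designed to avoid, which is why the references proceed that way.
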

The proof of the next lemma is provided in section \ref{sec:proofs}.
\begin{lem} \label{lem:couple}
There exists a constant $B_1>0$ such that for each $\mu\in\cB$ holds
\[
\begin{split}
|\Phi_\ve\pr\mu|&\leq |\mu|\\
\|\Phi_\ve\pr\mu\|&\leq  (2+2d)\|\mu\|+B_1\ell_\ve^{-1}|\mu|.
\end{split}
\]
where $\ell_\ve$ is the minimal distance between the intervals $A_{\ve,v}$
$(v\in V)$. 
\end{lem}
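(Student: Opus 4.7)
The first inequality is straightforward: for any $\vf\in\cD_1$, the composition $\vf\circ\Phi_\ve$ is bounded by $1$ in sup-norm (although not smooth), so approximating $\vf\circ\Phi_\ve$ by smooth functions of sup-norm at most $1$ gives $\mu(\vf\circ\Phi_\ve)\leq|\mu|$.

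For the BV bound, I fix $i\in\bZ^d$ and $\vf\in\cD_1$ and estimate $\mu((\partial_{x_i}\vf)\circ\Phi_\ve)$ by a partition--integration-by-parts strategy. Partition $X$ into the region $B_0$ where site $i$ is not in a collision, and the regions $B_v=\{x:x_i\in A_{\ve,v},\ x_{i+v}\in A_{\ve,-v}\}$ for $v\in V$ where site $i$ collides with its $v$-neighbor; these $2d+1$ sets are pairwise disjoint because the $A_{\ve,v}$'s are. The crucial observation is that on each $B_v$ with $v\in V$, the conditions $x_i\in A_{\ve,v}$ and $x_{i+v}\in A_{\ve,-v}$ place these coordinates in intervals disjoint from every other $A_{\ve,w}$, so no collision at a site other than $i$ or $i+v$ can involve $x_i$ or $x_{i+v}$. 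A direct chain-rule computation then gives
\[
(\partial_{x_i}\vf)\circ\Phi_\ve=\partial_{x_{i+v}}(\vf\circ\Phi_\ve) \text{ on the interior of } B_v,
\]
and analogously $(\partial_{x_i}\vf)\circ\Phi_\ve=\partial_{x_i}(\vf\circ\Phi_\ve)$ on $B_0$.

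Next I plan to smooth the indicators $\chi_{A_{\ve,\pm v}}$ into $\chi_{A_{\ve,\pm v}}^\eta$ (with $\eta\ll\ell_\ve$), and correspondingly smooth $\Phi_\ve$ into a map $\Phi_\ve^\eta$, then integrate by parts in the appropriate coordinate direction on each piece:
\[
\mu(\chi_v^\eta\,\partial_{x_{i+v}}(\vf\circ\Phi_\ve^\eta)) = \mu(\partial_{x_{i+v}}(\chi_v^\eta\,\vf\circ\Phi_\ve^\eta)) - \mu(\vf\circ\Phi_\ve^\eta\,\partial_{x_{i+v}}\chi_v^\eta).
\]
The first summand is bounded by $\|\mu\|$ since its argument has sup-norm at most $1$; summing over the $2d+1$ pieces---with the combinatorial bookkeeping for the $B_0$ contribution, whose indicator is determined by the $B_v$'s through $\chi_{B_0}=1-\sum_v\chi_{B_v}$---produces the $(2+2d)\|\mu\|$ main term.

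The hard part will be controlling the boundary pieces $\mu(\vf\circ\Phi_\ve^\eta\,\partial_{x_{i+v}}\chi_v^\eta)$, which concentrate on thin strips near $\partial A_{\ve,\pm v}$ with a singular $\eta^{-1}$-scale derivative coming from $(\chi_{A_{\ve,\pm v}}^\eta)'$. To trade this $\eta^{-1}$ singularity for the geometric $\ell_\ve^{-1}$ factor that appears in the statement, I will introduce a smooth cutoff $\rho_v$ on $[0,1]$ satisfying $\rho_v\equiv1$ on $A_{\ve,v}$, $\rho_v$ supported in an $\ell_\ve/2$-neighborhood of $A_{\ve,v}$, and $|\rho_v'|_\infty\leq C\ell_\ve^{-1}$; such a $\rho_v$ exists precisely because the $A_{\ve,w}$'s are separated by distance $\ell_\ve$. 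An additional integration by parts using $\rho_v$ will then replace the $\eta^{-1}$ singularity by $\ell_\ve^{-1}$, yielding a boundary contribution bounded by $C\ell_\ve^{-1}|\mu|$. Passing to the limit $\eta\to0$ and taking the supremum over $i\in\bZ^d$ and $\vf\in\cD_1$ gives the asserted inequality. I expect the most delicate point to be the bookkeeping that ensures all $\eta$-dependent terms cancel in the limit and that the constant $(2+2d)$ comes out correctly.
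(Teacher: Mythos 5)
Your decomposition of $X$ into $B_0$ and the $B_v$'s, and the chain--rule identities $(\partial_{x_i}\vf)\circ\Phi_\ve=\partial_{x_i}(\vf\circ\Phi_\ve)$ on $B_0$ and $=\partial_{x_{i+v}}(\vf\circ\Phi_\ve)$ on $B_v$, are exactly the first half of the paper's proof (your $B_v$ is its $\Delta_{\ve,v,i}$). The gap is in the regularization step, and specifically in mollifying the map $\Phi_\ve$ itself. Since $\Phi_\ve$ jumps by an amount of order one across $\partial B_v$ (from the identity to the swap $x_i\leftrightarrow x_{i+v}$), any smooth interpolation $\Phi_\ve^\eta$ has derivatives of size $\eta^{-1}$ on the transition collar, $\vf\circ\Phi_\ve^\eta$ inherits them, and the chain--rule identities fail there. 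Consequently your boundary pieces $\mu\bigl(\vf\circ\Phi_\ve^\eta\,\partial_{x_{i+v}}\chi_v^\eta\bigr)$ do \emph{not} tend to something of size $\ell_\ve^{-1}|\mu|$ as $\eta\to0$: they converge to the jump of $\vf\circ\Phi_\ve$ integrated against the trace of the marginal density of $\mu$ on the hyperplanes $\{x_{i+v}\in\partial A_{\ve,-v}\}$, an order-one quantity controlled by $\|\mu\|$ (via the BV trace), not by $\ell_\ve^{-1}|\mu|$. The cutoff $\rho_v$ cannot repair this because it is independent of $\vf$: writing $\partial\chi_v^\eta=\partial\rho_v+\partial(\chi_v^\eta-\rho_v)$ and integrating by parts again reintroduces $\partial_{x_{i+v}}(\vf\circ\Phi_\ve^\eta)$ on the support of $\chi_v^\eta-\rho_v$, which still contains the $\eta$-collar where that derivative is of size $\eta^{-1}$; the singularity circulates rather than disappearing.

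What is needed is a $\vf$-\emph{dependent} corrector, which is what the paper constructs. For the $B_0$ piece one subtracts from $\Id_{\Delta_{\ve,i}^c}\cdot(\vf\circ\Phi_\ve)$ a function $\vf_*$ that vanishes on the collision windows, matches the boundary values of $\vf\circ\Phi_\ve$ on $\partial_i\Delta_{\ve,i}$, and interpolates piecewise linearly across the gaps between the intervals $A_{\ve,w}$. The difference is then genuinely Lipschitz in $x_i$ (the jumps cancel exactly, not approximately), with sup norm $\leq 2|\vf|_\infty$ --- this doubling is where the constant $2$ in $(2+2d)$ comes from, and it is why your bookkeeping, which would yield $(1+2d)\|\mu\|$ from the main terms plus a purely $|\mu|$-type boundary term, cannot close as stated --- while $|\partial_{x_i}\vf_*|_\infty\leq C\ell_\ve^{-1}|\vf|_\infty$ because the interpolation runs over gaps of length at least $\ell_\ve$; testing $\partial_{x_i}\vf_*$ against $|\mu|$ is what produces the $C\ell_\ve^{-1}|\mu|$ term. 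For each $B_v$ piece a piecewise constant corrector suffices, contributing only $|\vf|_\infty\|\mu\|$. Your instinct that the $\ell_\ve$-separation is the source of the $\ell_\ve^{-1}|\mu|$ term is right, but the object with slope $\ell_\ve^{-1}$ must carry the boundary values of $\vf\circ\Phi_\ve$, not merely be a fixed cutoff.
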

The above lemmas imply a Lasota-Yorke type inequality for $F_\ve$, namely:
there exists a constant $B>0$ such that
\begin{equation}\label{eq:lasota-yorke}
\begin{split}
|F_\ve\pr\mu|&\leq |\mu|\\
\|F_\ve\pr\mu\|&\leq  (4+4d)\lambda^{-1}\|\mu\|+B\ell_\ve^{-1}|\mu|.
\end{split}
\end{equation}  
The second of these inequalities is relevant in the case
$\sigma=(4+4d)\lambda^{-1}<1$ which we will assume from now on.\footnote{With
  some more work one can certainly 
    weaken the requirement on the expansion
  constant $\lambda$. Since the goal of this note is to show that this type of
  maps can be treated by transfer operator methods, we decided to restrict
  ourselves
 to the simplest possible setting.}  The only thing left is to
define a one-site decoupling $\Phi_{\ve,i}$ of the dynamics and to show that we
can decouple at a single site paying only a small price. Namely, we define
\begin{equation}\label{eq:decouple}
  (\Phi_{\ve,i}(x))_k
  =
  \begin{cases}
    (\Phi_\ve(x))_k\quad &\text{ if }k-i\not\in V\cup\{0\}\\
    (\Phi_\ve(x))_k &\text{ if }  
v:=k-i  
   \in V,\text{ and }x_k\not\in A_{\ve,-v}\\
    x_k &\text { otherwise.}
  \end{cases}    
\end{equation}
A moment of thought will show that $\Phi_{\ve,i}$ differs from $\Phi_\ve$ only
insofar the coordinate $x_i$ is now independent of the other coordinates.
\begin{lem}\label{lem:decouple}
  For each $i\in\bZ^d$,
  \[
  \begin{split}
    |\Phi_{\ve,i}\pr\mu|&\leq |\mu|\\
    \|\Phi_{\ve,i}\pr\mu\|&\leq  (2+2d)\|\mu\|+B_1\ell_\ve^{-1}|\mu|\\    
    |\Phi_\ve\pr\mu-\Phi_{\ve,i}\pr\mu|&\leq 4d\ve\, \|\mu\|.
  \end{split}
  \]
\end{lem}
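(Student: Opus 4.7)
For the first two inequalities I would simply rerun the proof of Lemma~\ref{lem:couple}. The map $\Phi_{\ve,i}$ differs from $\Phi_\ve$ only at the single site $i$, where the $2d$ potential neighbour-swaps are disabled; on each piece of the natural partition of $X$ it is still a coordinate permutation with Jacobian one, and the partition is not finer than the one for $\Phi_\ve$. Consequently the Lasota--Yorke bookkeeping from Section~\ref{sec:proofs} yields the two bounds with the same constants.

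The interesting estimate is the third. I set $B_v:=\{x:x_i\in A_{\ve,v},\,x_{i+v}\in A_{\ve,-v}\}$, $v\in V$. Because the $A_{\ve,\cdot}$'s are pairwise disjoint, so are the $B_v$; moreover $\Phi_\ve=\Phi_{\ve,i}$ off $\bigcup_v B_v$ and, on $B_v$, $\Phi_\ve=\sigma_v\circ\Phi_{\ve,i}$, where $\sigma_v$ exchanges the coordinates at sites $i$ and $i+v$. A short case analysis, again using that the $A_{\ve,\cdot}$'s are disjoint, will show that for $x\in B_v$ and any $k\ne i,i+v$ the value $(\Phi_{\ve,i}(x))_k$ is independent of both $x_i$ and $x_{i+v}$; writing this value $\bar\Psi_v(\hat x)$ with $\hat x:=(x_k)_{k\ne i,i+v}$, one obtains, for every $\vf\in\cD_1$,
\[
(\Phi_\ve\pr\mu-\Phi_{\ve,i}\pr\mu)(\vf)=\sum_{v\in V}\int_{B_v}\bigl[\vf(x_{i+v},x_i,\bar\Psi_v(\hat x))-\vf(x_i,x_{i+v},\bar\Psi_v(\hat x))\bigr]\,d\mu(x).
\]

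The crucial step will be to write the $v$-integrand as $\partial_{x_i}\Psi_v$ for
\[
\Psi_v(x):=\mathbf{1}_{A_{\ve,-v}}(x_{i+v})\int_0^{x_i}\mathbf{1}_{A_{\ve,v}}(s)\bigl[\vf(x_{i+v},s,\bar\Psi_v(\hat x))-\vf(s,x_{i+v},\bar\Psi_v(\hat x))\bigr]\,ds.
\]
Since the bracket is bounded by $2$ and the $s$-integration sees only an interval of length $\ve$, $|\Psi_v|_\infty\le 2\ve$. The $v$-summand is then $\mu(\partial_{x_i}\Psi_v)$, which one would like to bound by $|\Psi_v|_\infty\|\mu\|=2\ve\|\mu\|$ directly from the definition of $\|\cdot\|$; summing over the $|V|=2d$ elements of $V$ produces $4d\ve\|\mu\|$. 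The hard part will be exactly this last inequality: the $\Psi_v$ carry two indicator factors and inherit the discontinuities of $\bar\Psi_v$, so they lie outside $\cD_1$. I would handle this by a standard mollification of $\Psi_v$ in the non-$x_i$ coordinates that preserves the uniform $L^\infty$-bound $2\ve$, applying the a priori estimate to the mollified functions and passing to the limit.
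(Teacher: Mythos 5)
Your argument is correct, and for the key third inequality it takes a genuinely different route from the paper. Both proofs start the same way: the two maps agree off the collision sets $B_v=\{x_i\in A_{\ve,v},\,x_{i+v}\in A_{\ve,-v}\}$, so the difference is a sum over $v\in V$ of integrals of a bracket bounded by $2|\vf|_\infty$ against $\mu$ restricted to $B_v$. From there the paper passes to the finite-dimensional marginal of $\mu$, whose density $h$ is BV with $|h|_{BV}\le\|\mu\|$, then to the two-dimensional marginal $h_v(x_i,x_{i+v})$ of $|h|$, and extracts the factor $\ve$ from $|A_{\ve,v}|=\ve$ via a Sobolev-type inequality for BV functions. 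You instead build an explicit antiderivative $\Psi_v$ in the $x_i$-direction with $|\Psi_v|_\infty\le 2\ve|\vf|_\infty$ and test $\mu$ against $\pd{x_i}\Psi_v$, which bounds each summand by $2\ve\|\mu\|$ straight from the definition \eqref{norms} of $\|\cdot\|$; this is the same device (non-smooth test functions that are Lipschitz in the distinguished coordinate, justified by mollification or by the characterization of $\|\cdot\|$ in the cited lecture notes, Section 3.3) that the paper itself uses in the proof of Lemma~\ref{lem:couple}, so it is stylistically more uniform and avoids invoking marginal densities and Sobolev inequalities. Two small points you should make explicit: (i) your case analysis showing that $(\Phi_{\ve,i}(x))_k$, $k\ne i,i+v$, does not vary as $x_i$ ranges over $A_{\ve,v}$ and $x_{i+v}$ over $A_{\ve,-v}$ does go through (disjointness of the $A_{\ve,w}$ kills all competing collisions), and it is needed for $\pd{x_i}\Psi_v$ to reproduce the integrand; (ii) the passage to the limit in the mollification step still uses that the finite-dimensional marginals of $\mu\in\cB$ are absolutely continuous (so that Lebesgue-a.e.\ convergence of the mollified derivatives suffices) -- the same fact the paper quotes from the lecture notes -- so your proof is not entirely free of that input, it just uses a weaker consequence of it. The constants match: $2d$ terms times $2\ve\|\mu\|$ gives $4d\ve\|\mu\|$.
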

\noindent The proof of this lemma can be found in Section \ref{sec:proofs}.

Equation \eqref{eq:lasota-yorke} and Lemma \ref{lem:decouple} are all that is
needed to prove our main theorem.
\begin{thm}\label{thm:main}
    If $\lambda>4+4d$ and if $\liminf_{\ve\searrow0}\ell_\ve>0$, then there
    is some $\ve_0>0$ such that for each $\ve\in(0,\ve_0)$ there exists a
    unique invariant element
  $\mu_{inv}\in\cB$ for the dynamics $F_\ve$. In addition, for all local
  smooth observables, $\mu_{inv}$ enjoys exponential decay of space-time
  correlations. 
\end{thm}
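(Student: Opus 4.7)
The plan is to adapt the perturbative transfer-operator scheme developed in \cite{kl-cmp}: the Lasota--Yorke inequality \eqref{eq:lasota-yorke} would provide a priori regularity of iterates $(F_\ve\pr)^n\mu$, while the one-site decoupling estimate of Lemma~\ref{lem:decouple} would supply the small parameter that controls the distance from $F_\ve\pr$ to a product dynamics. The hypothesis $\liminf_{\ve\searrow 0}\ell_\ve>0$ ensures that the constants $B\ell_\ve^{-1}$ in \eqref{eq:lasota-yorke} and $B_1\ell_\ve^{-1}$ in Lemma~\ref{lem:decouple} stay bounded uniformly for $\ve\in(0,\ve_0)$, so that the full argument of \cite{kl-cmp} has a fighting chance of going through with constants independent of $\ve$.

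For existence I would start from a reference probability $\mu_0\in\cB$ (for instance Lebesgue on $X$) and iterate \eqref{eq:lasota-yorke}. Because $\sigma:=(4+4d)\lambda^{-1}<1$, the iterates satisfy $\|(F_\ve\pr)^n\mu_0\|\leq M$ for some $M$ independent of $n$. The set $\{\mu\in\cB:|\mu|\leq 1,\,\|\mu\|\leq M\}$ is compact in the topology of pointwise convergence on local smooth observables: for each finite $\Lambda\subset\bZ^d$ the corresponding marginals on $[0,1]^\Lambda$ lie in a bounded $BV$-ball and are therefore precompact in $L^1$, and the usual Tychonoff/diagonal argument assembles these compactnesses into a global one. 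Passing to the limit on a Ces\`aro average of the orbit then produces an $F_\ve\pr$-invariant $\mu_{inv}\in\cB$ with $\|\mu_{inv}\|\leq M$.

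For uniqueness and exponential space-time decay I would follow \cite[Sections~3--4]{kl-cmp}. Given local observables $\vf,\psi$ supported on finite sets $\Lambda_1,\Lambda_2\subset\bZ^d$, I would expand $F_\ve^n=(\Phi_\ve\circ F_0)^n$ and then, step by step, replace each occurrence of $\Phi_\ve$ by a composition involving its one-site decouplings $\Phi_{\ve,i}$ at every site $i$ outside an appropriate space-time cone emanating from $\Lambda_1\cup\Lambda_2$. Each replacement costs at most $4d\ve\|\cdot\|$ in dual pairing by Lemma~\ref{lem:decouple}, while \eqref{eq:lasota-yorke} keeps the strong norm of all intermediate measures bounded. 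The fully decoupled dynamics has a product invariant measure under which observables with disjoint supports are uncorrelated; hence the telescoped sum of replacement errors controls both the correlation $\mu_{inv}(\vf\cdot(\psi\circ F_\ve^n))-\mu_{inv}(\vf)\mu_{inv}(\psi)$ and the difference $\mu_1-\mu_2$ of any two invariant candidates in $\cB$. Choosing $\ve_0$ small enough that the cumulative decoupling error over the cone is dominated by the Lasota--Yorke contraction $\sigma^n$ yields a strict contraction on mean-zero vectors, giving uniqueness together with the claimed exponential mixing in time and space.

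The main obstacle, and the technical heart of the \cite{kl-cmp} argument, is the combinatorial book-keeping of this decoupling procedure: Lemma~\ref{lem:decouple} is only uniform site-by-site, while the number of sites at which one must decouple grows with $n$ and with the separation between $\Lambda_1$ and $\Lambda_2$. Balancing this growth against $\sigma^n$, while keeping the strong norms of all intermediate decoupled measures uniformly under control, is exactly the estimate carried out there. Importing it into the present setting requires only that the abstract structure -- the norms \eqref{norms}, the Lasota--Yorke inequality \eqref{eq:lasota-yorke} and the site-by-site decoupling of Lemma~\ref{lem:decouple} -- match the one used in \cite{kl-cmp}, which is precisely what the preceding material has been designed to provide.
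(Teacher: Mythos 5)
Your proposal is correct and follows essentially the same route as the paper: both reduce the theorem to the machinery of \cite{kl-cmp}, substituting the Lasota--Yorke inequality \eqref{eq:lasota-yorke} for Lemma~2.2 there and the one-site decoupling bound of Lemma~\ref{lem:decouple} for the estimates (3.16)--(3.17) and Lemma~3.2 there, with $\liminf_{\ve\searrow0}\ell_\ve>0$ guaranteeing uniform constants. Your sketch of the existence (Ces\`aro/compactness) and uniqueness/mixing (space-time cone decoupling telescoped against $\sigma^n$) steps accurately describes what that imported argument does.
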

\begin{proof}
  The proof follows verbatim the arguments in \cite{kl-cmp}. More precisely,
  Lemma~2.3 of \cite{kl-cmp} holds in the present context under the above
  hypotheses. The proof of such a lemma is  exactly the same as in Section~3 of \cite{kl-cmp}, provided:
  \begin{enumerate}[a)]
  \item Lemma~2.2 of
  \cite{kl-cmp} is replaced by inequalities \eqref{eq:lasota-yorke} above; 
\item instead of inequality (3.16) of \cite{kl-cmp} (and of Lemma~3.2 in
  \cite{kl-cmp} from which that estimate follows) one uses the last assertion
  of our Lemma~\ref{lem:decouple};
\item   the analog of equation (3.17) in \cite{kl-cmp} is now obtained using
    estimate \eqref{eq:lasota-yorke} above instead of Lemma~2.2 of
    \cite{kl-cmp}. 
 \end{enumerate}
 The proof of the theorem is then exactly the same as the proof of Theorem~2.1
 of \cite{kl-cmp} (see section 4 of the cited paper).
\end{proof}
\section{Proofs}\label{sec:proofs}
\begin{proof}[Proof of Lemma \ref{lem:couple}]
  Let $\vf$ be a smooth local function and let $i\in\bZ^d$.
  Then, setting $\Delta_{\ve,v,i}=\{x\;:\; x_i\in
  A_{\ve,v}, x_{i+v}\in A_{\ve,-v}\}$ and $\Delta_{\ve,i}=\cup_{v\in V}
  \Delta_{\ve,v,i}$, we have
  \begin{displaymath}
    (\pd{x_i}\vf)\circ\Phi_\ve(x)=
    \begin{cases}
      \pd{x_i}\left((\vf\circ\Phi_\ve)\right)(x)&\text{if
      }x\in\Delta_{\ve,i}^c\\
      \pd{x_{i+v}}\left((\vf\circ\Phi_\ve)\right)(x)&\text{if
      }x\in\Delta_{\ve,v,i} \,.
    \end{cases}
  \end{displaymath}
  Therefore, for $\mu\in \cB$,
  \begin{equation}
    \label{eq:LY1}
    \begin{split}
      \Phi_\ve\pr\mu(\pd{x_i}\vf) &=
      \int_X  (\pd{x_i}\vf)\circ\Phi_\ve\,d\mu\\
      &= \int_{\Delta_{\ve,i}^c} \pd{x_i}(\vf\circ\Phi_{\ve})\, d\mu +
      \sum_{v\in
        V}\int_{\Delta_{\ve,v,i}} \pd{x_{i+v}}(\vf\circ\Phi_\ve)\,d\mu.
    \end{split}
  \end{equation}
  In order to estimate these integrals against the variation of $\mu$ in
  directions $i$ and $i+v$ we must modify the test function
  $\vf\circ\Phi_\ve$ in such a way that it becomes continuous in $x_i$ or
  $x_{i+v}$, respectively (see the characterization of $\|\cdot\|$ in \cite{kl-lecture} section
3.3).

  For the first integral, let $\vf_*$ be a function that, for each fixed
  $x_{\neq i}$, is piecewise linear (but not necessarily continuous!) in $x_i$
  interpolating between the requirements $\vf_*|_{\Delta_{\ve,i}}=0$ and
  $\vf_*(x)=\vf(\Phi_\ve(x))$ for each $x=(x_{\neq
    i},x_i)\in\boundary_i\Delta_{\ve,i}$ where $\boundary_i\Delta_{\ve,i}$
  denotes the set of those points in the boundary of $\Delta_{\ve,i}$ where
  the boundary is normal to the $x_i$-direction. Thus, given $x_{\neq i}$, the
  partial derivative $\pd{x_i}\vf_*(x)$ exists for Lebesgue-a.e. $x_i$, and
  $\tilde\vf:=\Id_{\Delta_{\ve,i}^c}\cdot(\vf\circ \Phi_\ve)-\vf_*$ is
  Lipschitz continuous in $x_i$, $|\tilde\vf|_\infty\leq2|\vf|_\infty$,
  $\tilde\vf|_{\partial_i\Delta_{\ve,i}}=0$, and $|\pd{x_i}\vf_*|_\infty\leq
  C\ell_\ve^{-1}|\vf|_\infty$ where $\ell_\ve$ is the minimal distance between
  intervals $A_{\ve,v}$ ($v\in V$). Hence,
  \begin{displaymath}
    \int_{\Delta_{\ve,i}^c} \pd{x_i}(\vf\circ\Phi_{\ve})\, d\mu
    =
    \int_X \pd{x_i}\tilde\vf\,d\mu+\int_X \pd{x_i}\vf_*\, d\mu
    \leq
    2|\vf|_\infty\|\mu\|+C\ell_\ve^{-1}|\vf|_\infty\,|\mu|\ .
  \end{displaymath}
  
  Similarly, for each $v\in V$, let $\vf_v$ be a function that, for each fixed
  $x_{\neq i+v}$, is piecewise constant in $x_{i+v}$ and such that
  $\vf_v|_{\Delta_{\ve,i,v}}=0$ and $\vf_v(x)=\vf(\Phi_\ve(x))$ for each
  $x=(x_{\neq i+v},x_{i+v})\in\boundary_{i+v}\Delta_{\ve,i,v}$. Then
  $\tilde\vf_v:=\Id_{\Delta_{\ve,i,v}}   \cdot(\vf\circ\Phi_\ve) +\vf_v$ is Lipschitz continuous in
  $x_{i+v}$, $|\tilde\vf_v|_\infty\leq|\vf|_\infty$ and  $\pd{x_{i+v}}\vf_v(x)=0$  for 
  $x\not\in\boundary_{i+v}\Delta_{\ve,i,v}$. Hence
  \begin{displaymath}
    \int_{\Delta_{\ve,v,i}} \pd{x_{i+v}}(\vf\circ\Phi_\ve)\,d\mu
    =
    \int_X\pd{x_{i+v}}\tilde\vf_v\,d\mu-\int_X\pd{x_{i+v}}\vf_v\,d\mu
    \leq
    |\vf|_\infty\|\mu\|
  \end{displaymath}

  Observing equation \eqref{eq:LY1}, this yields
  \begin{displaymath}
    \|\Phi_\ve\pr\mu\|\leq(2+2d)\|\mu\|+C\ell_\ve^{-1}|\mu|\ .
  \end{displaymath}
  This proves the second inequality of the lemma. The first one is trivial.
\end{proof}
\begin{proof}[Proof of Lemma \ref{lem:decouple}]
  The first two inequalities of this lemma are proved exactly as in
  Lemma~\ref{lem:couple}. We turn to the third one.  Let $A_\ve=\cup_{v\in
    V}A_{\ve,v}$ and notice that $\Phi_\ve(x)=\Phi_{\ve,i}(x)$ for all $x\in
  X$ such that $x_i\not\in A_\ve$. Thus, for all smooth local functions $\vf$
  and all $\mu\in\cB$
  \[
  |\Phi_{\ve}\pr\mu(\vf)-\Phi_{\ve,i}\pr\mu(\vf)|=\sum_{v\in
    V}\left|\int_X\Id_{A_{\ve,v}}(x_i)\Id_{A_{\ve,-v}}(x_{i+v})\left[\vf\circ
      \Phi_\ve(x)-\vf\circ \Phi_{\ve,i}(x)\right]\mu(dx)\right|
  \]
  Next, since $\vf\circ \Phi_\ve$ depends only on finitely many variables, say the variables
  in the finite set $\Lambda_0\subset \bZ^d$, we can consider the
  marginal of $\mu$ on $[0,1]^\Lambda$, $\Lambda=\Lambda_0\cup\{i+v\}_{v\in V\cup\{0\}}$. Such a marginal is absolutely
  continuous with respect to Lebesgue measure, and its density $h$ is a
  function of bounded variation with $|h|_{BV}\leq \|\mu\|$, (see
  \cite{kl-lecture} for details). Hence,
  \[
  |\Phi_{\ve}\pr\mu(\vf)-\Phi_{\ve,i}\pr\mu(\vf)|\leq \sum_{v\in V}2|\vf|_\infty\int_{[0,1]^\Lambda} \Id_{A_{\ve,v}}(x_i)\Id_{A_{\ve,-v}}(x_{i+v})|h(x)|dx\\
  \]
  and, since $|(|h|)|_{BV}\leq |h|_{BV}$ (see e.g. the proof of Lemma 2.3 in
  \cite{BGK-07}), we can consider the marginal $h_v(x_i,x_{i+v}):=\int
  |h(x)|dx_{k\not\in\{i,i+v\}}$. As $|h_v|_{BV}\leq |(|h|)|_{BV}\leq |h|_{BV}\leq\|\mu\|$, the usual
  Sobolev inequalities imply
  \[
  \begin{split}
    |\Phi_{\ve}\pr\mu(\vf)-\Phi_{\ve,i}\pr\mu(\vf)|
    &\leq 
    \sum_{v\in V}2|\vf|_\infty\int_{[0,1]^2} \Id_{A_{\ve,v}}(x)\Id_{A_{\ve,-v}}(y)h_v(x,y)dxdy\\
    &\leq 2(2d)|\vf|_\infty\|\mu\|\ve.
  \end{split}
  \]
\end{proof}


\begin{thebibliography}{999}
\footnotesize

\bibitem{baladi-gouezel} Baladi V., Gouezel S., \emph{Good Banach spaces for piecewise hyperbolic maps via interpolation}, 	arXiv:0711.1960v1
\bibitem{BGK-07} Bardet J.-B., Gou\"ezel S. and Keller G., \emph{Limit
    theorems for coupled interval maps}, Stochastics and Dynamics \textbf{7}
  (2007), 17-36
\bibitem{book} Lectures from the school-forum (CML 2004) held in Paris, June
  21--July 2, 2004. Edited by J.-R. Chazottes and B. Fernandez. Lecture Notes
  in Physics, 671. Springer, Berlin (2005).
\bibitem{demer-liverani} Demers M. and Liverani C., \emph{Stability of
    Statistical Properties in Two-dimensional Piecewise Hyperbolic Maps},
  Transactions of the American Mathematical Society 360 (2008), 4777-4814
\bibitem{gaspard-gilbert1} P. Gaspard and T. Gilbert, \emph{Heat Conduction
    and Fourier's Law by Consecutive Local Mixing and Thermalization} Physical
  Review Letters {\bf 101} 020601 (2008)
\bibitem{gaspard-gilbert2} P. Gaspard and T. Gilbert, \emph{Heat conduction
    and Fourier's law in a class of many particle dispersing billiards}, New
  Journal of Physics {\bf 10} 103004 (2008)
\bibitem{kl-lecture} Keller G. and Liverani C., \emph{A spectral gap
    for a one-dimensional lattice of coupled piecewise expanding interval
    maps}, in: Dynamics of Coupled Map Lattices and of Related Spatially
  Extended Systems (Eds.: J.-R. Chazottes, B. Fernandez), Lecture Notes in
  Physics \textbf{671} (2005), pp. 115--151, Springer Verlag
\bibitem{kl-cmp} Keller G. and Liverani C., \emph{Uniqueness of the SRB measure for piecewise expanding weakly coupled map lattices in any dimension}, Communications in Mathematical Physics, 262, 1, 33--50, (2006)
\bibitem{liverani-bunimovich-p-s} L. Bunimovich, C.Liverani, S. Pellegrinotti and Y.Suhov, \emph{Ergodic Systems of n Balls in a Billiard Table}, Communications in Mathematical Physics, 146, pp. 357-396, (1992).
\end{thebibliography}
\end{document}